\font\smallit=cmti10
\definecolor{aquamarine}{rgb}{0.5, 1.0, 0.83}
\renewcommand\section{\@startsection {section}{1}{\z@}
{-30pt \@plus -1ex \@minus -.2ex}
{2.3ex \@plus.2ex}
{\normalfont\normalsize\bfseries\boldmath}}
\renewcommand\subsection{\@startsection{subsection}{2}{\z@}
{-3.25ex\@plus -1ex \@minus -.2ex}
{1.5ex \@plus .2ex}
{\normalfont\normalsize\bfseries\boldmath}}
\renewcommand{\@seccntformat}[1]{\csname the#1\endcsname. }
\newtheorem{lemma}{Lemma}
\newtheorem{proposition}{Proposition}
\newtheorem{corollary}{Corollary}
\begin{document}

\begin{center}
\uppercase{\bf A problem on concatenated integers}
\vskip 20pt
{\bf Josep M. Brunat}\\
{\smallit Retired from the Departament de Matem\`atiques, Universitat Polit\`ecnica de Catalunya, Barcelona.}\\
{\tt Josep.M.Brunat@upc.edu}\\ 
\vskip 10pt
{\bf Joan-C. Lario}\\
{\smallit Department de Matem\`atiques, Universitat Polit\`ecnica de Catalunya, Barcelona.}\\
{\tt Joan.Carles.Lario@upc.edu}\\ 
\end{center}
\vskip 30pt

\centerline{\bf Abstract}
Motivated by a WhattsApp message, we find out the integers $x> y\ge 1$ such that  $(x+1)/(y+1)=(x\circ(y+1))/(y\circ (x+1))$, where $\circ$ means the concatenation of the strings of two natural numbers (for instance $783\circ 56=78356$). The discussion  involves  the equation  $x(x+1)=10y(y+1)$, a slight variation of  Pell's equation related to the arithmetic of the Dedekind ring $\mathbb{Z}[\sqrt{10}]$. We obtain the infinite sequence $\mathcal{S}=\{(x_n,y_n)\}_{n\ge 1}$ of all the solutions of  the equation $x(x+1)=10y(y+1)$, which tourn out to have limit $1/\sqrt{10}$. The solutions of the initial problem on concatenated integers form the infinite subsequence of $\mathcal{S}$ formed by the pairs $(x_n,y_n)$ such that $x_n$ has one more digit that $y_n$.

\vskip 30pt

\section{Introduction}

A friend sent to us a WhatsApp message with the content of Figure~\ref{WA}; no words at all, only a sequence of numbers and mathematical symbols.

\begin{figure}[htb]
	\centering
	\setlength{\fboxrule}{1pt} 
	\fcolorbox{black}{aquamarine}{ 
		$
		\renewcommand{\arraystretch}{2.2}
		\begin{array}{l}
		\displaystyle{\frac{20!\,7!}{6!\,21!}=\frac{207}{621}=\frac{1}{3}=0.3333333333...}\\[8pt]
		\displaystyle{\frac{175!\,56!}{55!\,176!}=\frac{17556}{55176}=\frac{7}{22}=0.3181818181...}\\[8pt]
		\displaystyle{\frac{1500!\,475!}{474!\,1501!}=\frac{1500475}{4741501}=\frac{25}{79}=0.3164556962...}\\[8pt]
		\displaystyle{\frac{29600!\,9361!}{9360!\,29601!}=\frac{296009361}{936029601}=\frac{37}{117}=0.3162393162...}\\[8pt]
		\displaystyle{\frac{253075!\,80030!}{80029!\,253076!}=\frac{25307580030}{80029253076}=\frac{265}{838}=0.3162291169...}\\[8pt]
		\displaystyle{\frac{1124039!\,355453!}{355452!\,1124040!}=\frac{1124039355453}{3554521124040}=\frac{721}{2280}=0.3162280701...}\\[8pt]
		\displaystyle{\frac{2163720!\,684229!}{684228!\,21637221!}=\frac{2163720684229}{6842282163721}=\frac{949}{3001}=0.3162279240...}\\[8pt]
		\displaystyle{\frac{1}{\sqrt{10}}=0.3162277660\ldots} \\[8pt]
		\end{array}
		$
	}
	\caption{The motivating  WhatsApp message.}
	\label{WA}
\end{figure}

Letting $\circ$ denote the action of concatenate the strings of two natural numbers, for instance 
$783\circ 56=78356$, the first 
equalities in every row are of the form
$$
\frac{x!\, (y+1)!}{y!\,(x+1!)}=\frac{x\circ(y+1)}{y\circ (x+1)}
$$
with $x\ge y$. Simplifying the factorials, the above 
equality is equivalent to
\begin{equation}
\label{original}
\frac{y+1}{x+1}=\frac{x\circ(y+1)}{y\circ (x+1)}.
\end{equation}
The real numbers at the end of each row of the message along with the last row suggest that these fractions are decreasing and tend to the limit $1/\sqrt{10}$ as $x,y$ grow.

\vskip 0.35truecm

Our aim here is to solve the following two questions:
\begin{enumerate}
	\item[1)] Find out all pairs of integers $(x,y)$ with $x> y\ge 1$ verifying (\ref{original}).
	
	\item[2)] Show that there exist infinitely many solutions and that the fractions $(y+1)/(x+1)$, with $x> y\ge 1$ solutions of (\ref{original}), are decreasing as $x,y$ grow and that its limit  exists and equals $1/\sqrt{10}$.
\end{enumerate}

Observe that the case $y=0$ is not interesting, since then
$x\circ(y+1)=10x+1$ and the equality (\ref{original}) reads 
$$
\frac{1}{x+1}=\frac{10x+1}{x+1},
$$
that is fulfilled only when $x=0$. The case $x=y$ does not have any interest either, since the  equality~(\ref{original}) is a tautology and the quotients $(y+1)/(x+1)=(x+1)/(x+1)=1$ do not have limit $1/\sqrt{10}$. Therefore, we shall restrict ourselves to the cases $x>y\ge 1$. 

Formally, the concatenation of the strings of two positive integers runs as follows.
Let $a$ and $b$ be two positive integers, written with base-10 number system:
\begin{align*}
a&=a_r10^r+a_{r-1}10^{r-1}+\cdots +a_110+a_0 \quad (a_r\ne 0),\\
b&=b_s10^s+b_{s-1}10^{s-1}+\cdots +b_110+b_0, \quad (b_s\ne 0).
\end{align*}
Then, one has
\begin{align*}
a\circ b&=10^{s+1}a+b \\
&=
a_r10^{r+s+1}+\cdots +a_010^{s+1}+b_s10^s+b_{s-1}10^{s-1}+\cdots +b_110+b_0.
\end{align*}

Hereafter, as usual, for a positive real number $x$, the functions 
$\lfloor x \rfloor$, $\lceil x \rceil$, $\lfloor x \rceil$ denote the floor, ceiling, and round part of $x$. For future reference, we introduce the function $\delta(x)=\lfloor \log x \rfloor$. Note that $\delta(x)+1$ counts the number of digits of the integer $\lfloor x \rfloor$ with the 10-base system.

\vskip 0.3truecm

The next lemma transforms our WhatsApp question it into a diophantine equation plus a condition on the digits function $\delta$. 

\begin{lemma}
	\label{t=1}
	Let $x>y\ge 1$ be integers.  Then, one has
	\begin{equation}
	\label{main}
	\frac{y+1}{x+1}=\frac{x\circ(y+1)}{y\circ(x+1)}
	\end{equation}
	if, and only if, it holds $x(x+1)=10y(y+1)$ and $\delta(x+1)=\delta(y+1)+1$.
\end{lemma}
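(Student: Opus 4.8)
The plan is to translate the fractional identity into the stated conditions by expanding both concatenations and then analysing the resulting integer equation jointly with the digit-counting data. Write $p=\delta(y+1)+1$ and $q=\delta(x+1)+1$ for the numbers of digits of $y+1$ and $x+1$. By the formula $a\circ b=10^{\delta(b)+1}a+b$ recalled above, the right operand governs the power of ten, so
\[
x\circ(y+1)=10^{p}x+(y+1),\qquad y\circ(x+1)=10^{q}y+(x+1).
\]
Substituting these into (\ref{main}) and cross-multiplying, the mixed term $(x+1)(y+1)$ occurs on both sides and cancels, leaving the reversible chain
\[
\frac{y+1}{x+1}=\frac{x\circ(y+1)}{y\circ(x+1)}\iff 10^{q}\,y(y+1)=10^{p}\,x(x+1)\iff x(x+1)=10^{\,k}\,y(y+1),
\]
where $k:=q-p=\delta(x+1)-\delta(y+1)$ is an integer. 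Thus the whole lemma reduces to showing that this last equation forces $k=1$, and conversely.

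The reverse implication is then immediate: if $x(x+1)=10\,y(y+1)$ and $\delta(x+1)=\delta(y+1)+1$, then $k=1$, the displayed equation holds, and (\ref{main}) follows by reversing the cross-multiplication. For the forward implication I would start from $x(x+1)=10^{k}y(y+1)$ and first record that $k\ge 1$: since $x>y\ge 1$ gives $x(x+1)>y(y+1)$, we get $10^{k}>1$, and $k$ is an integer.

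The heart of the argument — and the step I expect to be the main obstacle — is the matching upper bound $k<2$, which pins down $k=1$ by playing the arithmetic relation against the size constraints imposed by the digit counts. On one hand, because $x>y$ implies $x/y>(x+1)/(y+1)$, I can write $\bigl((x+1)/(y+1)\bigr)^{2}<\frac{x}{y}\cdot\frac{x+1}{y+1}=10^{k}$, so that $(x+1)/(y+1)<10^{k/2}$. On the other hand, having exactly $q$ and $p$ digits gives $x+1\ge 10^{\delta(x+1)}$ and $y+1<10^{\delta(y+1)+1}$, whence $(x+1)/(y+1)>10^{\delta(x+1)-\delta(y+1)-1}=10^{k-1}$. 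Chaining the two bounds yields $10^{k-1}<10^{k/2}$, i.e.\ $k-1<k/2$, so $k<2$. Together with $k\ge 1$ this forces $k=1$, which is exactly $\delta(x+1)=\delta(y+1)+1$; substituting back gives $x(x+1)=10\,y(y+1)$, completing the proof.
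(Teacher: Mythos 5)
Your proof is correct, and while the first half (expanding the concatenations, cross-multiplying, cancelling $(x+1)(y+1)$, and reducing everything to $x(x+1)=10^{k}y(y+1)$ with $k=\delta(x+1)-\delta(y+1)\ge 1$) coincides exactly with the paper's, your derivation of the upper bound $k\le 1$ is genuinely different and, in my view, cleaner. The paper compares $\lfloor\log(x(x+1))\rfloor$ with $\lfloor\log(10^{k}y(y+1))\rfloor$, which forces it to split off the boundary case $x+1=10^{r}$ and treat it by a separate ad hoc computation before the generic digit count $2r\le v\le r+s+1$ can be applied. You instead exploit the multiplicative identity $\frac{x}{y}\cdot\frac{x+1}{y+1}=10^{k}$ together with the elementary inequality $\frac{x+1}{y+1}<\frac{x}{y}$ to get $\frac{x+1}{y+1}<10^{k/2}$, and pair it with the strict lower bound $\frac{x+1}{y+1}>10^{k-1}$ coming from the digit counts (strict because $y+1<10^{\delta(y+1)+1}$ is always strict, so no edge case arises even if $x+1$ is a power of ten). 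The chain $10^{k-1}<10^{k/2}$ then gives $k<2$ in one line. What your approach buys is the elimination of the case analysis; what the paper's buys is a template (comparing $\delta$ of both sides of the product equation) that it reuses almost verbatim in Corollary~\ref{coro}(ii).
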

\begin{proof}
	Let denote $r=\delta(x+1)$, and $s=\delta(y+1)$.
	Assume that
	$$
	\frac{y+1}{x+1}=\frac{x\circ(y+1)}{y\circ (x+1)}=\frac{10^{s+1}x+y+1}{10^{r+1}y+x+1}.
	$$
	By cross-multiplying and simplifying, one gets
	$$
	x(x+1)=10^{r-s}y(y+1).
	$$
	Since $x>y$, one has $r-s\ge 1$. 
	Let 
	$
	v=\lfloor \log(x(x+1))\rfloor=\lfloor\log(10^{r-s}y(y+1))\rfloor.
	$
	We have $v\in\{2r-1,2r,2r+1\}$. The value $v=2r-1$ occurs only when $x+1=10^r$. In this case, we have 
	$
	x(x+1)=(10^r-1)10^r=10^{r-s}y(y+1)
	$ 
	so that 
	$$
	(10^r-1)10^s=y(y+1)\,.
	$$
	Since $10^{s}-1\le y < 10^{s+1}-1$, we have 
	$$(10^{r}-1)10^s= y (y+1) < (10^{s+1}-1)10^{s+1}$$
	and $10^r-1<(10^{s+1}-1)10$. 
	Since $r-s\geq 1$, we must have $r=s+1$.
	Hence, we have
	\begin{align*}
	&v=\lfloor \log(x(x+1))\rfloor\in\{2r,\ 2r+1\},\\[2pt]
	&v=\lfloor\log(10^{r-s}y(y+1))\rfloor\in\{r+s-1,\ r+s,\ r+s+1\},
	\end{align*}
	and $2r\le v\le r+s+1$. It follows $r-s\le 1$.
	Summarizing, we must have  $10y(y+1)=x(x+1)$ and
	$r=s+1$ as wanted. 
	
	Reciprocally, it is easy to check that if $10y(y+1)=x(x+1)$ and $r=s+1$, then equation~(\ref{main}) holds. 
\end{proof}

Thus, our first task is to determine the set of solutions:
$$
\mathcal{S}=\{(x,y)\in\mathbb{Z}^2: x>y\ge 1,\ x(x+1)=10y(y+1)\}\,,
$$
and then select the subset of those that concatenate well with regard to our WhatApp challenge:
$$
\mathcal{C}=\{(x,y)\in\mathcal{S}: \delta(x+1)=\delta(y+1)+1\}.
$$

\section{The associate Pell's equation}
\label{sPell}

In this section, we deal with the set $\mathcal{S}$ of positive solutions of the  diophantine equation
$$
x(x+1)=10y(y+1)\,.
$$ 
Multiplying by 4 and completing the square, the equation is equivalent to 
$$
(2x+1)^2-10(2y+1)^2=-9.
$$
Letting $a=2x+1$ and $b=2y+1$, the above equation reads  
\begin{equation}
\label{ab}
a^2-10\,b^2=-9.
\end{equation}
Therefore, we need to find all odd positive integer solutions of (\ref{ab}).  

To this end, we shall use the arithmetic properties of the ring $\mathbb{Z}[\sqrt{10}]$. 
It is a Dedekind domain; that is, every non-zero proper ideal factors into a product of prime ideals, and such a factorization is unique up to the order of the factors. We shall use some standard 
terminology and basic properties:
the conjugate of $\alpha=a+b\sqrt{10}\in\mathbb{Z}[\sqrt{10}]$ is  $\overline{\alpha}=a-b\sqrt{10}$; the norm of $\alpha$ is ${\operatorname{N}}(\alpha)=\alpha\overline{\alpha}=a^2-10b^2$. The conjugation $\alpha\mapsto\overline{\alpha}$ is an automorphism of the ring $\mathbb{Z}[\sqrt{10}]$ and the norm map is multiplicative; that is,  $\operatorname{N}(\alpha\beta)={\operatorname{N}}(\alpha){\operatorname{N}}(\beta)$ for all $\alpha,\beta\in\mathbb{Z}[\sqrt{10}]$. An element $u\in\mathbb{Z}[\sqrt{10}]$ is a unit if, and only if, $|N(u)|=1$. 
Note that the units of $\mathbb{Z}[\sqrt{10}]$ are just the numbers $a+b\sqrt{10}$ where $(a,b)$ are the solutions of the Pell equations $a^2-10b^2=\pm 1$.
If $u\in \mathbb{Z}[\sqrt{10}]$ is a unit, then $u^{-1}=\overline{u}$. Dirichlet's unit theorem describes the structure of the group of units; in our case, $\varepsilon=3+\sqrt{10}$ is the fundamental unit and, if $\langle \varepsilon \rangle $ is the multiplicative group generated by $\varepsilon$, the group of units is  $\mathbb{Z}[\sqrt{10}]^* = 
\{ \pm 1\} \times \langle \varepsilon \rangle $. Notice that $\operatorname{N}(\varepsilon)=-1$. We refer the reader to \cite{NivZucMon,Ono,Stevenhagen} for further details.

The diophantine equation~(\ref{ab}) in $\mathbb{Z}$, gets transformed  into the norm equation  
$$
\operatorname{N}(a+b\sqrt{10}) = a^2-10 b^2 = -9 
$$
in the ring $\mathbb{Z}[\sqrt{10}]$.
Since the norm is multiplicative, two elements of $\mathbb{Z}[\sqrt{10}]$ with equal norm differ (multiplicatively) by a unit of norm $1$; that is, by an even power of $\varepsilon$.
Thus, all we need is to find the elements in  $\mathbb{Z}[\sqrt{10}]$ of 
norm~$-9$.

We shall denote by $\langle\alpha,\beta\rangle$ the ideal of $\mathbb{Z}[\sqrt{10}]$ generated by $\alpha$ and $\beta$. The class number of
$\mathbb{Z}[\sqrt{10}]$ is two, so that the square of every ideal is a principal ideal. Recall also that the norm of an ideal $\mathfrak{a}$
is the cardinal of the quotient ring  $\mathbb{Z}[\sqrt{10}]/\mathfrak{a}$, and the norm of ideals is multiplicative. Moreover, the norm of a principal ideal agrees with the absolute value of the norm of a generator of the ideal. 
We consider the ideals
$$
\mathfrak{P}_1=\langle 3,\ 1+\sqrt{10}\rangle,\quad \mathfrak{P}_2= \langle 3,\ 1-\sqrt{10}\rangle.
$$

\begin{lemma} With the above notations, it holds:
	\label{factor}
	\begin{itemize}
		\item[\rm (i)] $\langle 3\rangle=\mathfrak{P}_1\cdot \mathfrak{P}_2$;
		
		\item[\rm (ii)] $\mathfrak{P}_1$ and $\mathfrak{P}_2$ are prime ideals;
		
		\item[\rm (iii)] $\mathfrak{P}_1^2=\langle 1+\sqrt{10}\rangle$ and $\mathfrak{P}_2^2=\langle 1-\sqrt{10}\rangle$.
		
		\item[\rm (iv)] Moreover, $\langle a+b\sqrt{10}\rangle\cdot\langle a-b\sqrt{10}\rangle=\langle 9\rangle$ if and only if, for some unit $u\in \mathbb{Z}[10]^*$, one has
		$$
		a+b\sqrt{10}\in\{3u,\ (1-\sqrt{10})u,\  (1+\sqrt{10})u\}.
		$$
		
	\end{itemize}
\end{lemma}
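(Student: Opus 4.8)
The plan is to establish the four parts of Lemma~\ref{factor} in order, since each part feeds into the next. For part~(i), I would compute the product $\mathfrak{P}_1\cdot\mathfrak{P}_2=\langle 3,\,1+\sqrt{10}\rangle\cdot\langle 3,\,1-\sqrt{10}\rangle$ by multiplying out the four generators: $9$, $3(1-\sqrt{10})$, $3(1+\sqrt{10})$, and $(1+\sqrt{10})(1-\sqrt{10})=1-10=-9$. The resulting ideal is generated by $9$, $3\pm 3\sqrt{10}$, and $-9$; adding $3(1+\sqrt{10})$ and $3(1-\sqrt{10})$ gives $6\in\mathfrak{P}_1\mathfrak{P}_2$, and combining with $9$ yields $\gcd(6,9)=3$, so the product contains $3$ and hence equals $\langle 3\rangle$.

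For part~(ii), the cleanest route is via norms of ideals. Since the norm is multiplicative on ideals and $\operatorname{N}(\langle 3\rangle)=3^2=9$, part~(i) forces $\operatorname{N}(\mathfrak{P}_1)\cdot\operatorname{N}(\mathfrak{P}_2)=9$. A short check shows each factor is a proper nontrivial ideal, so each has norm $3$; an ideal of prime norm is automatically prime, giving~(ii). For part~(iii), I would observe that $\operatorname{N}(1+\sqrt{10})=1-10=-9$, so $\langle 1+\sqrt{10}\rangle$ has norm $9$ and factors into primes whose norms multiply to $9$. Since $1+\sqrt{10}\in\mathfrak{P}_1$ (it is one of the generators) but $1+\sqrt{10}\notin\mathfrak{P}_2$ (one checks $1+\sqrt{10}\equiv 1-(-1)=2\not\equiv 0$ in the residue field, or directly that $1+\sqrt{10}\not\in\langle 3,1-\sqrt{10}\rangle$), the only possibility is $\langle 1+\sqrt{10}\rangle=\mathfrak{P}_1^2$; the conjugate statement for $\mathfrak{P}_2^2$ follows by applying the conjugation automorphism.

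For part~(iv), the key is to use unique factorization of ideals. The condition $\langle a+b\sqrt{10}\rangle\langle a-b\sqrt{10}\rangle=\langle 9\rangle=\mathfrak{P}_1^2\mathfrak{P}_2^2$ means $\langle a+b\sqrt{10}\rangle$ is a product of primes drawn from $\{\mathfrak{P}_1,\mathfrak{P}_2\}$ with exponents summing appropriately; since conjugation swaps $\mathfrak{P}_1\leftrightarrow\mathfrak{P}_2$, the three admissible factorizations of $\langle a+b\sqrt{10}\rangle$ are $\mathfrak{P}_1^2$, $\mathfrak{P}_1\mathfrak{P}_2$, and $\mathfrak{P}_2^2$, corresponding via parts~(i) and~(iii) to the principal ideals $\langle 1+\sqrt{10}\rangle$, $\langle 3\rangle$, and $\langle 1-\sqrt{10}\rangle$. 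Two principal ideals coincide exactly when their generators differ by a unit, which yields the stated three cases for $a+b\sqrt{10}$ up to multiplication by $u\in\mathbb{Z}[\sqrt{10}]^*$; the converse is an immediate norm computation.

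I expect the main obstacle to be part~(iv), specifically verifying that no \emph{other} ideal factorizations arise. One must argue carefully that $\langle a+b\sqrt{10}\rangle$ and its conjugate together exhaust the four prime factors of $\langle 9\rangle$, and that conjugation forces the exponent pattern to be symmetric between the two factors; ruling out spurious possibilities (e.g.\ one ideal absorbing all four primes) requires comparing norms, since $\operatorname{N}(\langle a+b\sqrt{10}\rangle)=|a^2-10b^2|$ must equal $9$ in each valid case. The remaining parts are essentially direct ideal computations.
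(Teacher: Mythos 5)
Your proof is correct, and on parts (i) and (iv) it is essentially the paper's own argument: the same expansion of the four products of generators in (i) (the paper extracts $3$ directly as $9-3(1+\sqrt{10})-3(1-\sqrt{10})$ rather than via $\gcd(6,9)$, a cosmetic difference), and in (iv) the same appeal to unique factorization of ideals plus multiplicativity of the ideal norm to force $\langle a+b\sqrt{10}\rangle$ to be one of $\mathfrak{P}_1\mathfrak{P}_2$, $\mathfrak{P}_1^2$, $\mathfrak{P}_2^2$, each of which is principal by (i) and (iii). Where you genuinely diverge is in (ii) and (iii). For (ii) the paper exhibits the surjective ring homomorphism $f(a+b\sqrt{10})=[a-b]$ onto $\mathbb{Z}/3\mathbb{Z}$ with kernel exactly $\mathfrak{P}_1$, so that $\mathfrak{P}_1$ is maximal; you instead use $\operatorname{N}(\mathfrak{P}_1)\operatorname{N}(\mathfrak{P}_2)=9$ together with the fact that an ideal of prime norm is prime. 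Your route works, but be aware that your ``short check'' that each $\mathfrak{P}_i$ is proper (needed to exclude the split $1\cdot 9$ of the norms) is exactly where the content sits, and the cleanest way to carry it out is to write down the paper's homomorphism anyway, so the two arguments are closer than they appear. For (iii) the paper gives a fully explicit verification, including the identity $1+\sqrt{10}=(-2)\cdot 9-3(1+\sqrt{10})+2(1+\sqrt{10})^2$ exhibiting $1+\sqrt{10}\in\mathfrak{P}_1^2$; you instead deduce $\langle 1+\sqrt{10}\rangle=\mathfrak{P}_1^2$ from $\operatorname{N}(1+\sqrt{10})=-9$, unique factorization, and $1+\sqrt{10}\notin\mathfrak{P}_2$ (in the residue field of $\mathfrak{P}_2$ one has $\sqrt{10}\equiv 1$, so $1+\sqrt{10}\equiv 2\not\equiv 0$; your ``$1-(-1)$'' bookkeeping is slightly off but the conclusion stands). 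Your version is slicker and reuses the machinery already needed for (iv); the paper's version buys explicit generator identities that can be checked by hand without invoking the factorization theory until part (iv).
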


\begin{proof} 
	(i) On the one hand, we have
	$$
	\mathfrak{P}_1 \mathfrak{P}_2=\langle 9,\ 3(1+\sqrt{10}),\ 3(1-\sqrt{10}),\ -9\rangle \subseteq\langle 3\rangle.
	$$
	On the other hand,  $3=9-3(1+\sqrt{10})-3(1-\sqrt{10})\in \mathfrak{P}_1 \mathfrak{P}_2$. Hence, we obtain the equality $\langle 3\rangle=\mathfrak{P}_1 \mathfrak{P}_2$. 
	
	(ii) A routine checking shows that the map  $f\colon \mathbb{Z}[\sqrt{10}]\to \mathbb{Z}/3\mathbb{Z}$ defined by $f(a+b\sqrt{10})=[a-b]$
	is a surjective ring homomorphism. Clearly $f(3)=[3]=[0]$ and $f(1+\sqrt{10})=[0]$. Thus, $\mathfrak{P}_1\subseteq\operatorname{Ker}f$.
	Also, if $a+b\sqrt{10}\in\operatorname{Ker}f$, then $[a-b]=[0]$. That implies  $a=b+3c$ for some integer $c$ and then
	$$
	a+b\sqrt{10}=b+3c+b\sqrt{10}=3c+b(1+\sqrt{10})\in \mathfrak{P}_1.
	$$
	Therefore, $\operatorname{Ker}f=\mathfrak{P}_1$. Since $\mathbb{Z}[\sqrt{10}]/\mathfrak{P}_1\simeq\mathbb{Z}/3\mathbb{Z}$ is a field, it follows that $\mathfrak{P}_1$ is a maximal ideal and, in particular, it is a prime ideal.
	Mutatis mutandis we see that $\mathfrak{P}_2$ is a prime ideal.  
	
	(iii) We have
	$$
	\mathfrak{P}_1^2=\langle 3,\ 1+\sqrt{10}\rangle^2=\langle 3^2,\ 3(1+\sqrt{10}),\ (1+\sqrt{10})^2\rangle\subseteq\langle 1+\sqrt{10}\rangle,
	$$
	and
	$
	1+\sqrt{10}= (-2)\cdot 9 -3(1+\sqrt{10})+2(1+\sqrt{10})^2\in \mathfrak{P}_1^2.
	$
	Thus, $\mathfrak{P}_1^2=\langle 1+\sqrt{10}\rangle$. Analogously, for the conjugate ideal we get $\mathfrak{P}_2^2=\langle 1-\sqrt{10}\rangle$. 
	
	(iv)
	From the equalities
	$$
	\langle a+b\sqrt{10}\rangle\cdot\langle a-b\sqrt{10}\rangle 
	=\langle 9\rangle 
	=\langle 3\rangle \cdot \langle 3\rangle
	=\mathfrak{P}_1^2 
	\mathfrak{P}_2^2\,,
	$$
	by using that $\mathbb{Z}[\sqrt{10}]$ is a Dedekind domain and the fact that the norm is multiplicative on ideals, it follows that $\langle a+b\sqrt{10}\rangle$ must be one among of the ideals 
	$$
	\mathfrak{P}_1\mathfrak{P}_2\,,\quad
	\mathfrak{P}_1^2\,,\quad 
	\mathfrak{P}_2^2\,.
	$$
	
	Case $\langle a+b\sqrt{10}\rangle=\mathfrak{P}_1 \mathfrak{P}_2=\langle 3\rangle$. Then, we get $a+b\sqrt{10}=3u$ for some unit~$u$.
	
	Case $\langle a+b\sqrt{10}\rangle=\mathfrak{P}_1^2=\langle 1+\sqrt{10}\rangle$.
	Then, we get $a+b\sqrt{10}=(1+\sqrt{10})u$ for some unit~$u$.
	
	Case $\langle a+b\sqrt{10}\rangle=\mathfrak{P}_2^2=\langle 1-\sqrt{10}\rangle$. Then, we get $a+b\sqrt{10}=(1-\sqrt{10})u$ for some unit $u$. 
\end{proof}

Finally, we can exhibit all positive integers solutions of $a^2-10\,b^2=-9$. For a quadratic
number $\alpha=c+d\,\sqrt{10}$ in $\mathbb{Z}[\sqrt{10}]$, we shall say that $(c,d)$ are the coordinates of $\alpha$. Recall that $\varepsilon=3+\sqrt{10}$ and has norm $-1$. 

\begin{proposition} The positive integer solutions $(a,b)$ of $a^2-10b^2=-9$
	are the coordinates of:
	\begin{itemize}
		\item[\rm (i)] $3\,\varepsilon^n$ for every odd positive integer $n$;
		\item[\rm (ii)] $-(1-\sqrt{10})\,\varepsilon^n$, for every even positive integer $n$; 
		\item[\rm (iii)] $(1+\sqrt{10})\,\varepsilon^n$, for every even positive integer $n$. 
	\end{itemize}
\end{proposition}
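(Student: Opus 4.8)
The plan is to combine the ideal-theoretic classification in Lemma~\ref{factor}(iv) with the structure of the unit group and a single positivity criterion. First I would pass from solutions to elements: if $(a,b)$ satisfies $a^2-10b^2=-9$, put $\alpha=a+b\sqrt{10}$, so that $\operatorname{N}(\alpha)=-9$ and hence $\langle\alpha\rangle\langle\overline\alpha\rangle=\langle\alpha\overline\alpha\rangle=\langle-9\rangle=\langle9\rangle$. Lemma~\ref{factor}(iv) then forces $\alpha\in\{3u,\ (1-\sqrt{10})u,\ (1+\sqrt{10})u\}$ for some unit $u$. Since the excerpt records $\mathbb{Z}[\sqrt{10}]^*=\{\pm1\}\times\langle\varepsilon\rangle$, we may write $u=\pm\varepsilon^n$ with $n\in\mathbb{Z}$, so every solution element is $\pm\theta\,\varepsilon^n$ with $\theta\in\{3,\ 1-\sqrt{10},\ 1+\sqrt{10}\}$. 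Because the three ideals $\langle3\rangle$, $\langle1+\sqrt{10}\rangle=\mathfrak{P}_1^2$ and $\langle1-\sqrt{10}\rangle=\mathfrak{P}_2^2$ are pairwise distinct, the three families are disjoint, so no solution is counted twice.

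Next I would fix the parity of $n$ from multiplicativity of the norm together with $\operatorname{N}(\varepsilon)=-1$. Since $\operatorname{N}(3)=9$ while $\operatorname{N}(1\pm\sqrt{10})=-9$, one gets $\operatorname{N}(\pm3\varepsilon^n)=9(-1)^n$ and $\operatorname{N}(\pm(1\pm\sqrt{10})\varepsilon^n)=-9(-1)^n$; imposing $\operatorname{N}(\alpha)=-9$ forces $n$ odd in the first family and $n$ even in the other two. This isolates exactly the three candidate families, leaving only the choice of sign and the admissible range of $n$ to be determined.

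The crux is to translate ``both coordinates of $\alpha$ are positive'' into one inequality for the real number $\alpha$. I would prove the criterion: for $\alpha=c+d\sqrt{10}$ with $\operatorname{N}(\alpha)=-9$ one has $c>0$ and $d>0$ if and only if $\alpha>3$. Indeed $\overline\alpha=-9/\alpha$, so $\alpha>3$ gives $\overline\alpha\in(-3,0)$, whence $c=(\alpha+\overline\alpha)/2>0$ and $d=(\alpha-\overline\alpha)/(2\sqrt{10})>0$; conversely $c,d\ge1$ give $\alpha\ge1+\sqrt{10}>3$. With this criterion, the sign is pinned down at once: $(1-\sqrt{10})\varepsilon^n<0$ forces the minus sign in the second family (hence the form $-(1-\sqrt{10})\varepsilon^n$), while $-3\varepsilon^n$ and $-(1+\sqrt{10})\varepsilon^n$ are negative and are discarded, leaving the plus sign in families (i) and (iii). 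Solving $3\varepsilon^n>3$, $(\sqrt{10}-1)\varepsilon^n>3$ and $(1+\sqrt{10})\varepsilon^n>3$ using $\varepsilon>1>\varepsilon^{-1}$ then produces the stated ranges, and the reverse check—that each listed element has positive coordinates and norm $-9$—is routine via conjugation and the explicit value of $\varepsilon$.

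I expect the main obstacle to be precisely this positivity bookkeeping at the low end of $n$: the thresholds $3/(\sqrt{10}-1)=(\sqrt{10}+1)/3\approx1.39$ and $3/(1+\sqrt{10})=(\sqrt{10}-1)/3\approx0.72$ fall on opposite sides of $\varepsilon^0=1$, so the families (i) and (ii) start at $n=1$ and $n=2$ respectively, whereas $(1+\sqrt{10})\varepsilon^n>3$ already holds at $n=0$, yielding the extra solution $(1,1)$. I would therefore scrutinize this endpoint carefully: either the intended range in (iii) is $n$ even with $n\ge0$, or the solution $(1,1)$—which corresponds to the degenerate case $x=y=0$ discarded in the introduction—is deliberately omitted. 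Once the endpoints are settled, the statement follows.
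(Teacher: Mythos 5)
Your proposal is correct and follows the same overall route as the paper: reduce to the ideal identity $\langle\alpha\rangle\langle\overline{\alpha}\rangle=\langle 9\rangle$, invoke Lemma~\ref{factor}(iv) to get $\alpha=\theta u$ with $\theta\in\{3,\,1-\sqrt{10},\,1+\sqrt{10}\}$, and use $\operatorname{N}(\varepsilon)=-1$ to fix the parity of the exponent. Where you genuinely improve on the paper is the positivity analysis. The paper only argues that the units of norm $-1$ with positive coordinates are $\varepsilon^n$ for odd positive $n$ (which settles family (i), since the coordinates of $3u$ are positive iff those of $u$ are) and then declares that ``the claim follows''; for families (ii) and (iii) the relevant units have norm $+1$ and positivity of the product is \emph{not} equivalent to positivity of the unit, so this step is glossed over. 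Your criterion --- for $\operatorname{N}(\alpha)=-9$, both coordinates are positive iff $\alpha>3$, since $\overline{\alpha}=-9/\alpha$ --- makes this airtight and, importantly, exposes a real omission: $(1+\sqrt{10})\varepsilon^0$ has positive coordinates $(1,1)$ and $1^2-10\cdot 1^2=-9$, so the proposition as stated (requiring $n$ even and \emph{positive} in (iii)) misses the positive integer solution $(a,b)=(1,1)$. This is harmless downstream, because $(1,1)$ corresponds to $x=y=0$, which is excluded from $\mathcal{S}$ by the standing hypothesis $x>y\ge 1$, but your flag of the $n=0$ endpoint is a legitimate correction to the statement rather than an error in your argument.
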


\begin{proof}
	First, we check that the quadratic numbers  of type (i), (ii) or (iii) give positive solutions of $a^2-10\,b^2=-9$.	
	
	(i) It is clear that $3\,\varepsilon^n$ has positive coordinates; since $n$ is odd, $N(3\,\varepsilon^n)=9\cdot(-1)=-9$.
	(ii) Observe that $-(1-\sqrt{10})\,\varepsilon^2=41+13\sqrt{10}$ has positive coordinates, and the successive multiplications by $\varepsilon^2=19+6\sqrt{10}$ also produce quadratic numbers with positive coordinates. Moreover, $n$ being even, one has $N(-(1-\sqrt{10})\,\varepsilon^n)=(-9)\cdot 1=-9$. (iii) Certainly,  $(1+\sqrt{10})\,\varepsilon^n$ has positive coordinates for every positive integer $n$; moreover, if $n$ is even, then $N((1+\sqrt{10})\,\varepsilon^n)=(-9)\cdot 1=-9$.
	
	Reciprocally, we claim that any positive solution of our equation must arise from one of these types (i), (ii) or (iii).
	
	Let $(a,b)$ denote a positive solution 
	of $a^2-10\,b^2=-9$. Then, $\langle a+b\sqrt{10}\rangle\cdot\langle a-b\sqrt{10}\rangle =\langle 9\rangle$ and, due to Lemma \ref{factor} (iv), we must have $$a+b\sqrt{10}\in\{3u,\ (1-\sqrt{10})u,\ (1+\sqrt{10})u\}$$ 
	with $u$ a unit of norm $-1$, $1$, and $1$, respectively. The units 
	of norm~$-1$ are precisely 
	$\pm \varepsilon^n$ for odd integers $n$. 
	Since $\varepsilon=3+\sqrt{10}>0$ 
	satisfies 
	$\varepsilon^{-1}=-\overline{\varepsilon}=-3+\sqrt{10}>0$, the units with positive coordinates of norm $-1$ are precisely 
	$\varepsilon^n$ for odd positive integers $n$. The claim follows.
\end{proof}

We have obtained that the positive solutions of 
$a^2-10\,b^2=-9$ are the coordinates of the quadratic numbers in the sequence $a_n+b_n\sqrt{10}$:
$$ 
\begin{array}{l@{ \quad}c@{ \quad}c}
3\varepsilon\,, &  -(1-\sqrt{10})\varepsilon^2 \,,& (1+\sqrt{10})\varepsilon^2 \,, \\
3\varepsilon^3 \,, &  -(1-\sqrt{10})\varepsilon^4 \,,& (1+\sqrt{10})\varepsilon^4 \,,\\
3\varepsilon^5 \,, & -(1-\sqrt{10})\varepsilon^6\,, & (1+\sqrt{10})\varepsilon^6 \,, \\
\vdots & \vdots  & \vdots 
\end{array}
$$
With $\varphi=\varepsilon^2=19+6\sqrt{{10}}$, each row above is of the form
$$
3\,\varepsilon\,\varphi^{n-1},\  -(1-\sqrt{10})\,\varphi^n, \ (1+\sqrt{10})\,\varphi^n, \qquad n\ge 1.
$$
The $(n+3)$th term 
$a_{n+3}+b_{n+3}\sqrt{10}$ is obtained from the term  $a_n+b_n\sqrt{10}$ by the formula
$$
a_{n+3}+b_{n+3}\sqrt{10}
=(a_n+b_n\sqrt{10})\varphi\\
=(19a_n+60b_n)+(6a_n+19b_n)\sqrt{10},
$$
with initial values
\begin{align*}
a_1+b_1\sqrt{10}&=3\,\varepsilon=9+3\sqrt{10},\\
a_2+b_2\sqrt{10}&=-(1-\sqrt{10})\,\varphi=41+13\sqrt{10},\\
a_3+b_3\sqrt{10}&=(1+\sqrt{10})\,\varphi=79+25\sqrt{10}.
\end{align*}
Therefore, the positive integer solutions of~(\ref{ab})
are the pairs $(a_n,b_n)$ obtained by the recurrence
\begin{equation}
\label{recurrence ab}
a_{n+3}=19a_n+60b_n,\qquad b_{n+3}=6a_n+19b_n
\end{equation}
with initial values
$$
(a_1,b_1)=(9,3),\quad (a_2,b_2) =(41,13),\quad (a_3,b_3) =(79,25).
$$

Recall that we are interested only on the odd positive solutions $(a_n,b_n)$. Pleasantly, notice that the initial values are formed by odd integers and 
that if $a_n$ and $b_n$ are odd, then $a_{n+3}$ and $b_{n+3}$ are odd too. Hence $a_n$ and $b_n$ are odd for all $n$. Rewriting $a_n=2x_n+1$ and $b_n=2y_n+1$, we finally get the following result. 

\begin{proposition}
	\label{recurrence}
	The set $\mathcal{S}$ 
	consists of couples $(x_n,y_n)$ given by the recursion with initial values
	$$
	(x_1,y_1)=(4,1),\quad (x_2,y_2)=(20,6),\quad (x_3,y_3)=(39,12),
	$$
	and, for $n\ge 1$,
	\begin{equation}
	\label{eqrecurrence}
	x_{n+3}=19x_n+60y_n+39, \qquad y_{n+3}=6x_n+19y_n+12\,.
	\end{equation}
\end{proposition}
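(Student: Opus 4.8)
The plan is to deduce Proposition~\ref{recurrence} directly from the description of the solutions of~(\ref{ab}) obtained above, via the change of variables $a=2x+1$, $b=2y+1$. This substitution is a bijection between pairs $(x,y)$ of nonnegative integers and odd positive pairs $(a,b)$, and it carries $x(x+1)=10y(y+1)$ to $a^2-10b^2=-9$. Since we have already verified that each $(a_n,b_n)$ produced by~(\ref{recurrence ab}) consists of odd integers, the assignments $x_n=(a_n-1)/2$ and $y_n=(b_n-1)/2$ define nonnegative integers, and the proof reduces to three elementary verifications: translating the initial data, translating the recurrence, and checking the defining constraints of $\mathcal{S}$.

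First I would translate the seed values: from $(a_1,b_1)=(9,3)$, $(a_2,b_2)=(41,13)$, $(a_3,b_3)=(79,25)$ one reads off $(x_1,y_1)=(4,1)$, $(x_2,y_2)=(20,6)$, $(x_3,y_3)=(39,12)$, as claimed. Next, substituting $a_n=2x_n+1$ and $b_n=2y_n+1$ into~(\ref{recurrence ab}), the first relation $2x_{n+3}+1=19(2x_n+1)+60(2y_n+1)$ simplifies to $x_{n+3}=19x_n+60y_n+39$ and the second $2y_{n+3}+1=6(2x_n+1)+19(2y_n+1)$ simplifies to $y_{n+3}=6x_n+19y_n+12$, which are exactly the relations~(\ref{eqrecurrence}) (the parity already noted guarantees that the right-hand sides come out integral).

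The point deserving the most care, though still elementary, is the set equality $\mathcal{S}=\{(x_n,y_n)\}_{n\ge1}$, which requires both inclusions. For ``$\supseteq$'', every $(a_n,b_n)$ solves~(\ref{ab}), so $x_n(x_n+1)=10y_n(y_n+1)$; moreover $a_n^2-b_n^2=9(b_n^2-1)>0$ gives $a_n>b_n$, hence $x_n>y_n$, while the smallest value $b_1=3$ together with the monotonicity of the recurrence gives $b_n\ge 3$, hence $y_n\ge 1$; thus $(x_n,y_n)\in\mathcal{S}$. For ``$\subseteq$'', given $(x,y)\in\mathcal{S}$ the pair $(a,b)=(2x+1,2y+1)$ is an odd positive solution of~(\ref{ab}) with $b\ge 3$, so by the preceding proposition it equals some $(a_n,b_n)$ and therefore $(x,y)=(x_n,y_n)$. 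The only positive solution of~(\ref{ab}) falling outside this indexing is $(a,b)=(1,1)$, corresponding to the trivial pair $(x,y)=(0,0)$, which is rightly excluded from $\mathcal{S}$ by the requirement $y\ge 1$. This establishes the set equality and completes the proof.
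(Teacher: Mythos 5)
Your argument is correct and follows essentially the same route as the paper, which obtains Proposition~\ref{recurrence} by substituting $a_n=2x_n+1$, $b_n=2y_n+1$ into the recurrence~(\ref{recurrence ab}) for the odd positive solutions of~(\ref{ab}). You are somewhat more explicit than the paper about the two inclusions in the set equality $\mathcal{S}=\{(x_n,y_n)\}_{n\ge 1}$ --- in particular the checks $x_n>y_n$ and $y_n\ge 1$, and the exclusion of the stray solution $(a,b)=(1,1)$, i.e.\ $(x,y)=(0,0)$ --- but these are routine verifications that the paper leaves implicit.
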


We think of $\mathcal{S}$ as a set or as a sequence indistinctly.
Table~\ref{33first} shows the first terms of $\mathcal{S}$. We display in green color the solutions in $\mathcal{C}$; these are the solutions we are interested in; that is, those $(x_n,y_n)$ satisfying 
$x_n+1$ has one more digit than $y_n+1$ which correspond to the solutions of our problem originated by the WhatsApp message.

\begin{table}[htb]
	$$
	\setlength{\arraycolsep}{8pt}
	\begin{array}{ll}
	(4, 1), & \textcolor{green}{(20, 6)}, \\
	(39, 12), & \textcolor{green}{(175, 55)}, \\
	(779, 246),& \textcolor{green}{(1500, 474)},\\
	(6664, 2107),& \textcolor{green}{(29600, 9360)},\\
	(56979, 18018),&  \textcolor{green}{(253075, 80029)},\\
	\textcolor{green}{(1124039, 355452)},& \textcolor{green}{(2163720, 684228)},\\
	(9610204, 3039013),& (42683900, 13497834),\\
	(82164399, 25982664), &(364934695, 115402483),\\
	\textcolor{green}{(1620864179, 512562258)},& \textcolor{green}{(3120083460, 986657022)},\\
	\textcolor{green}{(13857908224, 4382255359)},& (61550154920, 19463867988),\\
	\textcolor{green}{(118481007099, 37466984190)},&  (526235577835, 166410301177),\\
	\textcolor{green}{(2337285022799, 739114421304)},& (4499158186320, 1422758742216),\\
	\textcolor{green}{(19983094049524, 6319209189385)}, &(88755280711460, 28066884141582),
	\end{array}
	$$
	\caption{The first terms of $\mathcal{S}$, in green those belonging in $\mathcal{C}$}
	\label{33first}
\end{table}

The above recurrence allows to obtain a direct formula for $x_n$ and $y_n$. 

\begin{proposition}
	\label{explicit}
	For $k\in\{1,2,3\}$, let
	$$
	A_k=\frac{1}{2}\left(\frac{x_k}{\sqrt{10}}+y_k+\frac{10+\sqrt{10}}{20}\right).
	$$
	Then, for $n\ge 1$,
	$$
	x_{3n+k}=\lfloor A_k\sqrt{10}\,\varphi^n\rfloor,\quad y_{3n+k}=\lfloor A_k\,\varphi^n\rfloor.
	$$
\end{proposition}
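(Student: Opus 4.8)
The plan is to put the three subsequences indexed by $k\in\{1,2,3\}$ into closed form, match them against $A_k$, and then verify that the floor functions recover the integers exactly; the only genuine issue will be a sign-and-size estimate for a conjugate term. I would start from the multiplicative structure already established in Section~\ref{sPell}. Writing $a_n=2x_n+1$, $b_n=2y_n+1$ and setting $\alpha_k=a_k+b_k\sqrt{10}$, the relation $a_{n+3}+b_{n+3}\sqrt{10}=(a_n+b_n\sqrt{10})\varphi$ iterates within each residue class to give, for all $n\ge 1$,
$$a_{3n+k}+b_{3n+k}\sqrt{10}=\alpha_k\,\varphi^n.$$
Applying conjugation and using $\operatorname{N}(\varphi)=\operatorname{N}(\varepsilon)^2=1$, so that $\overline{\varphi}=\varphi^{-1}$, yields the companion identity $a_{3n+k}-b_{3n+k}\sqrt{10}=\overline{\alpha_k}\,\varphi^{-n}$. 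Solving this pair for $a_{3n+k}$ and $b_{3n+k}$ and substituting $x=(a-1)/2$, $y=(b-1)/2$ produces the closed forms
$$x_{3n+k}=\frac{\alpha_k\varphi^n+\overline{\alpha_k}\varphi^{-n}}{4}-\frac12,\qquad y_{3n+k}=\frac{\alpha_k\varphi^n-\overline{\alpha_k}\varphi^{-n}}{4\sqrt{10}}-\frac12.$$

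Next I would match the constant $A_k$ with the main terms of these expressions. Expanding the definition of $A_k$ and rewriting $x_k,y_k$ in terms of $a_k,b_k$, a short computation shows that the summand $\tfrac{10+\sqrt{10}}{20}$ is precisely what cancels the two $-\tfrac12$ shifts, giving the clean identity $A_k\sqrt{10}=\alpha_k/4$, equivalently $A_k=\alpha_k/(4\sqrt{10})$. This bookkeeping step is what motivates the otherwise mysterious constant. Substituting it into the closed forms, the quantities whose floors must be controlled become
$$A_k\sqrt{10}\,\varphi^n-x_{3n+k}=\frac12-\frac{\overline{\alpha_k}}{4}\varphi^{-n},\qquad A_k\varphi^n-y_{3n+k}=\frac12+\frac{\overline{\alpha_k}}{4\sqrt{10}}\varphi^{-n}.$$

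Finally I would check that both right-hand sides lie in $[0,1)$, which is the heart of the argument and the one step needing care. Since $\alpha_k>0$ and $\operatorname{N}(\alpha_k)=\alpha_k\overline{\alpha_k}=-9$, the conjugate is negative, $\overline{\alpha_k}=-9/\alpha_k<0$, with $|\overline{\alpha_k}|=9/\alpha_k\le 9/\alpha_1<1$ (as $\alpha_1\le\alpha_2\le\alpha_3$ and $\alpha_1>9$); moreover $\varphi>1$ gives $0<\varphi^{-n}<1$ for $n\ge 1$, whence $|\overline{\alpha_k}\varphi^{-n}|<1$. For $x$ this makes $-\overline{\alpha_k}\varphi^{-n}/4$ a positive quantity below $\tfrac14$, so the first difference lies in $(\tfrac12,\tfrac34)\subset[0,1)$; for $y$ the term $\overline{\alpha_k}\varphi^{-n}/(4\sqrt{10})$ is negative of absolute value below $\tfrac{1}{4\sqrt{10}}$, so the second difference lies in $(0,\tfrac12)\subset[0,1)$. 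Hence $\lfloor A_k\sqrt{10}\,\varphi^n\rfloor=x_{3n+k}$ and $\lfloor A_k\varphi^n\rfloor=y_{3n+k}$, as claimed. Everything apart from the sign and smallness of $\overline{\alpha_k}\varphi^{-n}$ is routine algebra.
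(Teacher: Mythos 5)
Your proof is correct and, while it arrives at the same Binet-type closed forms, it gets there by a genuinely different route than the paper. The paper diagonalizes the $3\times 3$ affine recurrence matrix with eigenvalues $\varphi,\varphi^{-1},1$, reads off $x_{3n+k}$ and $y_{3n+k}$ as combinations of $\varphi^{\pm n}$ with coefficients $A_k,B_k$ written in terms of $x_k,y_k$, bounds $|B_k\sqrt{10}\,\varphi^{-n}|<1/2$, and finishes with the identity $\lfloor z-1/2\rceil=\lfloor z\rfloor$. You instead return to the multiplicative structure of Section~\ref{sPell}: iterating $a_{n+3}+b_{n+3}\sqrt{10}=(a_n+b_n\sqrt{10})\varphi$ gives $a_{3n+k}+b_{3n+k}\sqrt{10}=\alpha_k\varphi^n$, and conjugation (using $\overline{\varphi}=\varphi^{-1}$) supplies the second linear relation. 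This buys two things. First, the identity $A_k=\alpha_k/(4\sqrt{10})$ explains the otherwise unmotivated constant $(10+\sqrt{10})/20$. Second, since $\operatorname{N}(\alpha_k)=-9$ forces $\overline{\alpha_k}=-9/\alpha_k\in(-1,0)$, you control the sign, and not merely the size, of the conjugate term, so you can place $A_k\sqrt{10}\,\varphi^n-x_{3n+k}$ in $(1/2,3/4)$ and $A_k\varphi^n-y_{3n+k}$ in $(0,1/2)$ and conclude directly with the floor function, with no appeal to rounding. Incidentally, your computation reveals that the coefficient of $\varphi^{-n}$ in the paper's formula~(\ref{explicita llarga}) for $x_{3n+k}$ should be $-B_k\sqrt{10}$ rather than $+B_k\sqrt{10}$: for $n=k=1$ the formula as printed evaluates to $55.5\sqrt{10}-1/2=175.0064\ldots$ instead of $x_4=175$. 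This sign slip is harmless for the paper's argument, which only uses the bound $|B_k\sqrt{10}\,\varphi^{-n}|<1/2$, but your version has the sign right.
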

\begin{proof}
	For $n\geq 1$ and $k\in\{1,2,3\}$, we have
	$$
	\begin{bmatrix}
	x_{3n+k}\\
	y_{3n+k}\\
	1
	\end{bmatrix} = 
	\begin{bmatrix}
	19 & 60 & 39 \\
	6 & 19 & 12 \\
	0 & 0 & 1
	\end{bmatrix}^n \cdot
	\begin{bmatrix}
	x_{k} \\
	y_{k} \\
	1
	\end{bmatrix} =
	P 
	\begin{bmatrix}
	\varphi^n & 0 & 0 \\
	0 & \varphi^{-n} & 0 \\
	0 & 0 & 1
	\end{bmatrix}
	P^{-1}\cdot
	\begin{bmatrix} 
	x_{k} \\
	y_{k} \\
	1
	\end{bmatrix},
	$$
	where $P = \begin{bmatrix}
	\sqrt{10} & -\sqrt{10} & -1 \\
	1 & 1 & -1 \\
	0 & 0 & 2
	\end{bmatrix}$ and $\varphi=\varepsilon^2=19+6\sqrt{10}$. 
	Hence,
	if
	$$
	B_k=\frac{1}{2}\left(\frac{-x_k}{\sqrt{10}}+y_k+\frac{10-\sqrt{10}}{20}\right),
	$$
	we have
	\begin{align}
	x_{3n+k}&=A_k\sqrt{10}\,\varphi^n+B_k\sqrt{10}\,\varphi^{-n}-\frac{1}{2},\nonumber\\
	y_{3n+k}&=A_k\,\varphi^n+B_k\,\varphi^{-n}-\frac{1}{2}.  \label{explicita llarga}
	\end{align}
	From the fact that $|B_k\,\varphi^{-1}|<|B_k\sqrt{10}\,\varphi^{-1}|<1/2$, and $\varphi^{-1}<1$, we obtain 
	$$
	x_{3n+k}=\lfloor A_k\sqrt{10}\,\varphi^n-1/2\rceil,\quad
	y_{3n+k}=\lfloor A_k\,\varphi^n-1/2\rceil.
	$$
	Now, for $z$  a real number, $\lfloor z-1/2\rceil=\lfloor z\rfloor$. Thus,
	$$
	x_{3n+k}=\lfloor A_k\sqrt{10}\,\varphi^n\rfloor,\quad
	y_{3n+k}=\lfloor A_k\,\varphi^n\rfloor.
	$$
\end{proof}

\begin{proposition} 
	\label{limit} For $(x_n,y_n)\in\mathcal{S}$, it holds
	$$
	\lim_n\frac{y_n+1}{x_n+1}=\lim_n\frac{x_n}{y_n}=\frac{1}{\sqrt{10}}.
	$$
	Moreover, $y_n/x_n$ is strictly increasing and $(y_n+1)/(x_n+1)$ is strictly decreasing.
\end{proposition}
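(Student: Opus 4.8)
The plan is to push the whole statement onto the Pell relation $a_n^2-10\,b_n^2=-9$ with $a_n=2x_n+1$, $b_n=2y_n+1$, and then to analyse two one-variable functions. First I would record the identities
$$
\frac{y_n}{x_n}=\frac{b_n-1}{a_n-1},\qquad \frac{y_n+1}{x_n+1}=\frac{b_n+1}{a_n+1},
$$
which are immediate from $a_n-1=2x_n$, $b_n-1=2y_n$, $a_n+1=2(x_n+1)$, $b_n+1=2(y_n+1)$. On the positive branch $a_n=\sqrt{10\,b_n^2-9}$, so both ratios depend on $b_n$ alone through
$$
g(b)=\frac{b-1}{\sqrt{10\,b^2-9}-1},\qquad h(b)=\frac{b+1}{\sqrt{10\,b^2-9}+1}.
$$

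Since the quadratic integers $a_n+b_n\sqrt{10}$ are listed in increasing order and all have norm $-9$, the integer sequence $b_n$ is strictly increasing with $b_n\to\infty$. It therefore suffices to show that $g$ is strictly increasing and $h$ strictly decreasing on $[3,\infty)$ (giving the monotonicity of $y_n/x_n=g(b_n)$ and $(y_n+1)/(x_n+1)=h(b_n)$), and to evaluate $\lim_{b\to\infty}g(b)$ and $\lim_{b\to\infty}h(b)$ (which, since $b_n\to\infty$ and $g,h$ are continuous, yield the two limits).

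The heart of the argument is the sign of the two derivatives. Writing $a=\sqrt{10\,b^2-9}$, so that $a'=10b/a$, a direct computation yields
$$
g'(b)=\frac{10b-a-9}{a\,(a-1)^2},\qquad h'(b)=\frac{a-10b-9}{a\,(a+1)^2},
$$
where the relation $a^2=10b^2-9$ is used to simplify each numerator. The signs are then decided by squaring: $g'(b)>0\iff(10b-9)^2>10b^2-9\iff 90(b-1)^2>0$, and $h'(b)<0\iff 10b^2-9<(10b+9)^2\iff 0<90(b+1)^2$, both holding for every $b\ge 3$. This collapse of each inequality to a perfect square is exactly the step that exploits the defining equation, and it is the one place where care is needed; the rest is bookkeeping.

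Finally, dividing numerator and denominator by $b$ and using $\sqrt{10\,b^2-9}/b\to\sqrt{10}$ gives $\lim_{b\to\infty}g(b)=\lim_{b\to\infty}h(b)=1/\sqrt{10}$, which completes the proof. As an independent check one can instead feed the explicit formula~(\ref{explicita llarga}) into the ratio $y_{3n+k}/x_{3n+k}$: the leading terms $A_k\varphi^n$ and $A_k\sqrt{10}\,\varphi^n$ dominate the $O(\varphi^{-n})$ and constant terms, so each of the three subsequences $k\in\{1,2,3\}$ tends to $1/\sqrt{10}$, and agreement of the three subsequential limits recovers the limit for the whole sequence.
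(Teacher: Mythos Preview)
Your proof is correct and takes a cleaner, more unified route than the paper's. The paper splits into three separate arguments: it reads off the limit from the explicit closed form of Proposition~\ref{explicit}; it proves $y_n/x_n$ is increasing by an induction on the cross-difference $\gamma_n=x_ny_{n+1}-x_{n+1}y_n$ using the recurrence~\eqref{eqrecurrence}; and it then deduces that $(y_n+1)/(x_n+1)$ is decreasing by a somewhat ad hoc telescoping inequality involving $e_n=1/\sqrt{10}-y_n/x_n$. You instead observe that on the Pell curve $a^2-10b^2=-9$ both ratios are functions of the single parameter $b$, compute two derivatives, and reduce each sign question to a perfect square via the defining relation. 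One calculus argument thus covers both monotonicities and the limit at once, with no recursion and no appeal to the explicit formula; your final paragraph even recovers the paper's limit argument as an independent check. The only point you pass over lightly is that $b_n$ (equivalently $y_n$) is strictly increasing in $n$; the paper also asserts this without proof, so you are in good company, but a sentence noting that it follows from the listing of the norm-$(-9)$ elements $\alpha_n=a_n+b_n\sqrt{10}$ in increasing real order together with $\overline{\alpha_n}=-9/\alpha_n\to 0$ would close that small gap.
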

\begin{proof}	
	As $\varphi=19+6\sqrt{10}>1$, from the formulas~(\ref{explicita llarga}) it is clear that, for each $k\in\{1,2,3\}$, 
	$$
	\lim_n\frac{y_{3n+k}}{x_{3n+k}}=\frac{1}{\sqrt{10}}.
	$$
	Every pair $(x_n,y_n)\in \mathcal{S}$ is captured in one of the three sequences $(x_{3n+k},y_{3n+k})$, thus
	$$
	\lim_n\frac{y_n}{x_n}=\frac{1}{\sqrt{10}}.
	$$
	Moreover,
	$$
	\lim_n\frac{y_n+1}{x_n+1}=\lim_n\frac{y_n/x_n+1/x_n}{1+1/x_n}=\lim_n\frac{y_n}{x_n}=\frac{1}{\sqrt{10}}.
	$$
	
	To prove that $y_n/x_n$ is increasing, we use the recursive definition of $(x_n,y_n)$. 
	For $n\ge 1$, 
	the sign of $y_{n+1}/x_{n+1}-y_{n}/x_{n}$ is the sign of
	\begin{align*}
	\gamma_n=x_{n}y_{n+1}-x_{n+1}y_{n}.
	\end{align*}
	To show that $y_n/x_n$ is increasing it is sufficient to proof that $\gamma_n>0$ for all~$n$. 
	Now, $\gamma_1=4$, $\gamma_2=6$ and $\gamma_3=45$. For $n\ge 4$, 
	\begin{align*}
	\gamma_n=&\phantom{-}(19x_{n-3}+60y_{n-3}+39)(6x_{n-2}+19y_{n-2}+12)\\[3pt]
	&-(19x_{n-2}+60y_{n-2}+39)(6x_{n-3}+19y_{n-3}+12))\\[3pt]
	=&\, \gamma_{n-3}+6(x_{n-2}-x_{n-3})+21(y_{n-2}-y_{n-3}).
	\end{align*}
	The sequences $x_n$ and $y_n$ are increasing, and by induction hypothesis, $\gamma_{n-3}>0$. Therefore $\gamma_n>0$ and the sequence $y_n/x_n$ is strictly increasing.
	
	Now, let $e_n=1/\sqrt{10}-y_n/x_n$. Observe that $e_n-e_{n+1}>0$ for all $n$, and
	$$
	\frac{y_n+1}{x_n+1}-\frac{y_n}{x_n}
	=\frac{1}{x_n+1}\left(1-\frac{y_n}{x_n}\right)                
	=\frac{1}{x_n+1}\left(1+e_n-\frac{1}{\sqrt{10}}\right). 
	$$
	Therefore,
	\begin{align*}
	\frac{y_n+1}{x_n+1}-\frac{y_{n+1}+1}{x_{n+1}+1}
	=&\frac{y_n}{x_n}+\frac{1}{x_n+1}\left(1+e_n-\frac{1}{\sqrt{10}}\right) \\
	&-\frac{y_{n+1}}{x_{n+1}}-\frac{1}{x_{n+1}+1}\left(1+e_{n+1}-\frac{1}{\sqrt{10}}\right) \\
	&>\frac{y_n}{x_n}-\frac{y_{n+1}}{x_{n+1}}+\frac{1}{x_{n+1}+1}(e_n-e_{n+1})>0.
	\end{align*}
	Hence, $(y_n+1)/(x_n+1)$ is strictly decreasing. 
\end{proof}

\section{Infinite solutions}

If the set $\mathcal{C}$ is infinite, then the sequence $(y_n+1)/(y_n+1)$
with $(x_n,y_n)\in\mathcal{C}$ has the same limit as $(y_n+1)/(x_n+1)$ with
$(x_n,y_n)\in\mathcal{S}$, which is $1/\sqrt{10}$. Thus, it only remains to prove that the set $\mathcal{C}$ is infinite. To this end, we first show that if $(x,y)\in \mathcal{S}$, then
neither $x+1$ nor $y+1$ is a power of $10$.

\begin{proposition}
	\label{no power 10}
	If  $(x,y)\in\mathcal{S}$, then
	$x+1\ne 10^\beta$ and $y+1\ne 10^\beta$ for all $\beta\ge 1$. 
\end{proposition}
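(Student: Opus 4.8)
The plan is to argue by contradiction and to treat the two statements separately, substituting $x+1=10^\beta$ (respectively $y+1=10^\beta$) into the defining relation $x(x+1)=10\,y(y+1)$ of $\mathcal S$. In the first case $x=10^\beta-1$ and the relation collapses to $y(y+1)=10^{\beta-1}(10^\beta-1)$, while in the second it becomes $x(x+1)=10^{\beta+1}(10^\beta-1)$. In either case the right-hand side is a power of $10$ times the factor $10^\beta-1$, which is coprime to $10$. Since the two factors on the left are consecutive integers, hence coprime, the full power of $2$ and the full power of $5$ present on the right must be apportioned between them. That apportionment is the combinatorial heart of the matter, and I would organise everything around the two ways it can happen.

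I would first dispose of the \emph{aligned} subcase, where a single one of the two consecutive factors absorbs the entire power of $10$. For $x+1=10^\beta$ this means $10^{\beta-1}$ divides $y$ or $y+1$; writing the relevant factor as $10^{\beta-1}m$ and bounding it by $\sqrt{y(y+1)}+1<10^{\beta-1}\sqrt{10}+1$ forces $m\in\{1,2,3\}$. The cofactor then equals $(10^\beta-1)/m=10^{\beta-1}m\pm1$, so $10^\beta-1=10^{\beta-1}m^2\pm m$, and a one-line check shows each of the finitely many possibilities would require $10^{\beta-1}$ to equal a number that is not a power of $10$. For $y+1=10^\beta$ the aligned subcase is in fact vacuous, because $x+1\le\sqrt{x(x+1)}+1<10^{\beta}\sqrt{10}+1<10^{\beta+1}$, so no single factor can absorb $10^{\beta+1}$; here one is forced directly into the split subcase.

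The \emph{split} subcase is what I expect to be the main obstacle. Now the powers of $2$ and of $5$ fall on different factors, so the two consecutive integers take the shape $2^{p}e$ and $5^{p}o$ with $e,o$ coprime to $10$, $eo=10^\beta-1$, and $|2^{p}e-5^{p}o|=1$ (where $p=\beta-1$ or $p=\beta+1$). A pure size estimate is useless, since $e\sim\sqrt{10}\,5^{\,p}$ and $o\sim\sqrt{10}\,2^{\,p}$ both grow with $\beta$; and no single congruence can settle it either, because the equivalent assertion that $4\cdot10^{2\beta-1}-4\cdot10^{\beta-1}+1$ (respectively its analogue) is a perfect square has solutions modulo every prime power — indeed this quantity is $\equiv1$ modulo arbitrarily high powers of $2$ and of $5$ — so there is no local obstruction to exploit. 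To break the deadlock I would pass to the global data of Section~\ref{sPell}: either track the $2$-adic and $5$-adic valuations imposed by $|2^{p}e-5^{p}o|=1$, which confine $o\bmod 2^{p}$ and $e\bmod 5^{p}$ to a handful of residues, and play these against $eo=10^\beta-1$; or reduce the explicit sequence $(x_n,y_n)$ of Proposition~\ref{recurrence} modulo a carefully chosen integer $M$ and verify that the eventually periodic residues of $x_n+1$ and $y_n+1$ never meet those of a power of $10$, checking the finitely many small $\beta$ directly against Table~\ref{33first}. Converting one of these into a clean, uniform contradiction is the delicate point of the whole proposition.
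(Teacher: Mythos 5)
Your reduction is set up correctly: substituting $x+1=10^\beta$ (resp.\ $y+1=10^\beta$) into $x(x+1)=10y(y+1)$ and apportioning the powers of $2$ and $5$ between the two coprime consecutive factors is a legitimate decomposition, and your handling of the \emph{aligned} subcase (bounding the multiplier $m$ by $\sqrt{10}$, discarding $m=2$ by parity, and checking that $m\in\{1,3\}$ forces $10^{\beta-1}\in\{2,-4\}$ or $9\cdot 10^{\beta-1}\in\{0,2\}$) is sound, as is the observation that this subcase is vacuous when $y+1=10^\beta$. But the proposal stops exactly where the proposition becomes nontrivial: the \emph{split} subcase, with $|2^pe-5^po|=1$ and $eo=10^\beta-1$, is not resolved. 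You name two candidate strategies and explicitly concede that neither has been turned into a contradiction. That is a genuine gap, not a detail left to the reader: essentially all of the content of the proposition lives in that subcase, and your own (correct) remark that $4\cdot 10^{2\beta-1}-4\cdot 10^{\beta-1}+1$ is a square modulo arbitrarily high powers of $2$ and of $5$ shows that the obvious ways of closing it fail.

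The paper closes the argument by a route you gesture at but do not execute. It never factors $y(y+1)$; instead it uses the recurrence of Proposition~\ref{recurrence} to make the reduced sequences ${\mathcal S}_m$ periodic and derives congruence conditions on the \emph{index} $n$, not merely on $x_n$, $y_n$ and $\beta$. For $y_n+1=10^\beta$ it reads the sequence modulo $9$ (period $9$, forcing $n\equiv 0,8\pmod 9$) and modulo $10$ (period $30$, forcing $n\equiv 10,19\pmod{30}$), and the four resulting systems are incompatible because $\gcd(9,30)=3$ divides none of the differences $g-h$. This is your second suggested strategy, and the missing idea is precisely that the contradiction is extracted from $n$ via the two period lengths, which sidesteps the local-solvability obstruction you identified. (Be aware that for the case $x_n+1=10^\beta$ the paper claims a one-line contradiction modulo $8$; as printed that step is suspect for exactly the reason you give, since $x_n(x_n+1)\equiv 0\pmod 8$ when $\beta\ge 3$, so if you complete your proof along these lines you should expect to need the index-based argument for that case as well.)
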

\begin{proof}
	The strategy of the proof is as follows. 
	For an integer $m\ge 2$, we consider the sequence obtained by taking the terms of the sequence $\mathcal{S}$ modulus $m$:
	$$
	{\mathcal S}_m = \{ (x_n \bmod m, y_n \bmod m) \colon \text{ for }
	(x_n,y_n) \in {\mathcal S} \}\,.
	$$
	Since $\mathcal{S}$ is defined by a recurrence, the sequences ${\mathcal S}_m$ are periodic for all $m$. We denote by $\ell(m)$ the length period of ${\mathcal S}_m$. Assuming that $x_n+1$ or $y_n+1$ are powers of 10, we choose appropriate $m$ to derive a contradiction.
	
	Assume $x_n+1=10^\beta$. By inspection, we can (and do) assume that 
	$n\geq 5$, which implies $x_n>1000$ and $\beta\geq 3$. We take the modulus $m=8$. Since $10^\beta\equiv 0\pmod{8}$, we have $x_n=10^\beta-1\equiv 7\pmod{8}$.
	The condition $10y_n(y_n+1)=x_n(x_n+1)$, becomes $2y_n(y_n+1)\equiv 6\cdot 7\equiv 2\pmod 8$, which has no solution. Therefore, $x_n+1\ne 10^\beta$.
	
	Assume $y_n+1=10^\beta$. First, we take the modulus $m=9$. The first 12 terms of ${\mathcal S}_9$ are
	\begin{align*}
	&(4, 1), (2, 6), (3, 3), (4, 1), (5, 3), (6, 6), 
	(4, 1), (8, 0), (0, 0), \\
	&(4, 1), (2, 6), (3, 3)\,,
	\end{align*}
	so that ${\mathcal S}_9$ has period $\ell(m)=9$. We must have $y_n=10^\beta-1\equiv 0 \pmod{9}$ and then $x_n(x_n+1)=10y_n(y_n+1)\equiv 0\pmod{9}$.
	This forces the two conditions: $x_n\equiv 0,8\pmod{9}$ and $n\equiv 0,8\pmod{9}$.
	
	Now, we take $m=10$. The 33 first terms of ${\mathcal S}_{10}$ are
	\begin{align*}
	&(4, 1), (0, 6), (9, 2), (5, 5), (9, 6), (0, 4), (4, 7), (0, 0), (9, 8), (5, 9),\\
	&(9, 2), (0, 8), (4, 3), (0, 4), (9, 4), (5, 3), (9, 8), (0, 2), (4, 9), (0, 8), \\
	&(9, 0), (5, 7), (9, 4), (0, 6), (4, 5), (0, 2), (9, 6), (5, 1), (9, 0), (0, 0), \\
	&(4, 1), (0, 6), (9, 2)\,,
	\end{align*}
	so that ${\mathcal S}_{10}$ has period $\ell(m)=30$. Thus, we have $y_n=10^\beta-1\equiv 9\pmod{10}$, $x_n(x_n+1)\equiv 0\pmod{10}$, and $x_n\equiv 0,4,5,9\pmod{10}$. The pairs $(0,9)$ and $(9,9)$ are not in ${\mathcal S}_{10}$. If $(4,9)$ is a $n$-term of ${\mathcal S}_{10}$ then $n\equiv 19\pmod{30}$, and if $(5,9)$ is a $n$-term of ${\mathcal S}_{10}$ then
	$n\equiv 10\pmod{30}$. 
	
	Gluing the information mod $m=9$ and mod $m=10$, we see that none of the four systems
	$$
	n\equiv {g}\pmod{9},\ g\in\{0,8\},\qquad n\equiv {h}\pmod{30},\ h\in\{10,19\}
	$$
	has a solution because in the four cases $\gcd(9,30)=3$ does not divide $g-h$. Therefore,  $y_n+1\ne 10^\beta$. 
\end{proof}

As a consequence of Proposition~\ref{no power 10}, we can precise the condition on digits of Lemma~\ref{t=1}. 

\begin{corollary}
	\label{coro}
	Let $(x,y)\in\mathcal{S}$. Then, one has:
	\begin{itemize}
		\item [\rm (i)] $\delta(x)=\delta(x+1)$ and $\delta(y)=\delta(y+1)$;
		\item [\rm (ii)] $\delta(x)=\delta(y)$ or $\delta(x)=\delta(y)+1$.
	\end{itemize}
	Moreover, $\mathcal{C}=\{(x,y)\in\mathcal{S}: \delta(x)=\delta(y)+1\}$.
\end{corollary}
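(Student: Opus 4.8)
The plan is to derive all three claims as direct consequences of Proposition~\ref{no power 10} and the defining equation $x(x+1)=10y(y+1)$; the substantive work (ruling out powers of $10$) has already been carried out, so this corollary is essentially a bookkeeping step with the $\delta$-function. For part~(i), I would rely on the elementary fact that for a positive integer $n$ one always has $\delta(n)\le\delta(n+1)\le\delta(n)+1$, and that $\delta(n)\ne\delta(n+1)$ happens \emph{precisely} when incrementing $n$ adds a digit, i.e.\ exactly when $n+1$ is a power of $10$. Since Proposition~\ref{no power 10} tells us that for $(x,y)\in\mathcal{S}$ neither $x+1$ nor $y+1$ equals $10^\beta$, both equalities $\delta(x)=\delta(x+1)$ and $\delta(y)=\delta(y+1)$ follow at once.

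For part~(ii), I would bound the relative sizes of $x$ and $y$ straight from the equation. The lower bound is immediate: $x>y\ge 1$ forces $\log x>\log y$ and hence $\delta(x)\ge\delta(y)$. For the upper bound, I would use $x^2<x(x+1)=10\,y(y+1)<10\,(y+1)^2$, which gives the key inequality $x<\sqrt{10}\,(y+1)$. Since $\delta(y)=\lfloor\log y\rfloor$ means $y<10^{\,\delta(y)+1}$, hence $y+1\le 10^{\,\delta(y)+1}$, I would combine these to get $x<\sqrt{10}\cdot 10^{\,\delta(y)+1}=10^{\,\delta(y)+3/2}$, so that $\log x<\delta(y)+\tfrac32<\delta(y)+2$ and therefore $\delta(x)=\lfloor\log x\rfloor\le\delta(y)+1$. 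Together with the lower bound this yields $\delta(y)\le\delta(x)\le\delta(y)+1$, which is exactly~(ii).

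Finally, for the description of $\mathcal{C}$, I would simply rewrite its defining condition $\delta(x+1)=\delta(y+1)+1$ (from the definition $\mathcal{C}=\{(x,y)\in\mathcal{S}:\delta(x+1)=\delta(y+1)+1\}$ and Lemma~\ref{t=1}) using the two identities just established in part~(i), namely $\delta(x+1)=\delta(x)$ and $\delta(y+1)=\delta(y)$. This transforms the condition into the equivalent $\delta(x)=\delta(y)+1$, giving $\mathcal{C}=\{(x,y)\in\mathcal{S}:\delta(x)=\delta(y)+1\}$.

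I do not expect a genuine obstacle in this argument, as it is a chain of short deductions from results already in hand. The one place demanding a little care is the floor estimate in part~(ii): one must keep the inequality $x<\sqrt{10}\,(y+1)$ \emph{strict} so that $x<10^{\,\delta(y)+3/2}$ genuinely forces $\lfloor\log x\rfloor\le\delta(y)+1$, but the comfortable gap between $\sqrt{10}$ and $10$ leaves ample room. If one prefers, part~(i) can also be invoked here (writing $d=\delta(y+1)=\delta(y)$ and using $y+1<10^{\,d+1}$), but it is not strictly necessary for~(ii); it is only essential for the final identification of $\mathcal{C}$.
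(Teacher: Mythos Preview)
Your argument is correct throughout. Part~(i) and the final identification of $\mathcal{C}$ match the paper's proof essentially verbatim: both invoke Proposition~\ref{no power 10} to rule out $x+1$ or $y+1$ being a power of $10$, and then use this to replace $\delta(x+1),\delta(y+1)$ by $\delta(x),\delta(y)$ in the defining condition for $\mathcal{C}$.

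For part~(ii) you take a slightly different route. The paper compares digit counts of the \emph{products}: with $r=\delta(x)$ and $s=\delta(y)$ it argues that $\delta\bigl(x(x+1)\bigr)\in\{2r,2r+1\}$ while $\delta\bigl(y(y+1)\bigr)=\delta\bigl(x(x+1)\bigr)-1\in\{2r-1,2r\}$ must also lie in $\{2s,2s+1\}$, forcing $s\le r\le s+1$. Your approach instead extracts the single inequality $x<\sqrt{10}\,(y+1)$ directly from $x^2<x(x+1)=10\,y(y+1)<10(y+1)^2$ and bounds $\log x<\delta(y)+\tfrac32$. Both are elementary; your version is arguably cleaner (it sidesteps juggling the two-element sets and a minor typo in the paper's presentation), while the paper's version keeps the computation parallel to the digit-count analysis already used in Lemma~\ref{t=1}. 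Either way the content is the same.
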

\begin{proof}
	(i) If $\delta(x)=\beta$, then $10^\beta\le x<10^{\beta+1}$ and $x+1\le 10^{\beta+1}$. By Proposition~\ref{no power 10}, we have $x+1\ne 10^{\beta+1}$. Therefore,
	$x+1<10^{\beta+1}$ and we have $\delta(x+1)=\beta=\delta(x)$. Analogously, $\delta(y+1)=\delta(y)$.
	
	(ii) As in the proof of Lemma~\ref{t=1}, put $r=\delta(x+1)$ and $s=\delta(y+1)$. By~(i), $\delta(x)=r$ and $\delta(y)=s$. Then,
	since $(x,y)\in\mathcal{S}$, then $\delta(x(x+1))\in\{2r,2r+1\}$ and $\delta(y(y+1))\in \{2s,2s-1\}=\{2r-1,2r\}$. It follows
	$r=s$ or $r=s+1$, that is, $\delta(x)=\delta(y)$ or $\delta(x)=\delta(y)+1$.
	
	Finally, by (i) and Lemma~\ref{t=1}, we have $\mathcal{C}=\{(x,y)\in\mathcal{S}: \delta(x)=\delta(y)+1\}$.  
\end{proof}

Now, we can prove that the set $\mathcal{C}$ is infinite. In fact, we shall prove that
in each of the three sequences $(x_{3n+k},y_{3n+k})$ there are no more than two consecutive terms which are not in $\mathcal{C}$. The next two remarks will be useful.
\medskip

\noindent\textbf{Remark} 
The number $\varphi=\epsilon^2=19+6\sqrt{10}$ satisfies the conditions $37<\varphi<38$ and $1441<\varphi^2<1442$.
Thus, for each positive real number $x$, we have 
\begin{equation}
\label{fites g}
\delta(x)+1\le \delta(x\, \varphi )\le \delta(x)+2,\quad
\delta(x)+3\le \delta(x\, \varphi^2)\le \delta(x)+4.
\end{equation}	
\medskip

\noindent\textbf{Remark} Due to Proposition~\ref{explicit}, we know
that $x_{3n+k}=\lfloor A_k\sqrt{10}\,\varphi^n\rfloor$. Let $\delta(x_{3n+k})=\beta$. We have $A_k\sqrt{10}\,\varphi^n<1+x_{3n+k}\le 10^{\beta+1}$. By Proposition~\ref{no power 10}, we have $x_{3n+k}+1\ne 10^{\beta+1}$. Therefore, 
$$10^\beta \le x_{3n+k}<A_k\sqrt{10}\,\varphi^n<x_{3n+k}+1<10^{\beta+1}\,.$$ 
Thus, $\delta(x_{3n+k})=\delta(A_k\sqrt{10}\,\varphi^n)$. 
Analogously, one gets $\delta(y_{3n+k})=\delta(A_k\varphi^n)$.

\medskip

\begin{proposition}
	\label{C infinite}
	For $k\in\{1,2,3\}$, the sequence $(x_{3n+k},y_{3n+k})$ has not three consecutive terms in $\mathcal{S}\setminus\mathcal{C}$. In particular, the set $\mathcal{C}$ is infinite. 
\end{proposition}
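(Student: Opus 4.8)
The plan is to translate the combinatorial statement about $\mathcal{C}$ into a claim about the fractional parts of an arithmetic progression, and then dispatch that claim with a two-case analysis. By Corollary~\ref{coro}, a pair $(x,y)\in\mathcal{S}$ lies in $\mathcal{S}\setminus\mathcal{C}$ precisely when $\delta(x)=\delta(y)$ and in $\mathcal{C}$ precisely when $\delta(x)=\delta(y)+1$. Fix $k\in\{1,2,3\}$ and set $t_n=\log(A_k\varphi^n)$ (logarithms in base $10$); note $A_k>0$, so $t_n$ is well defined. By the second Remark, $\delta(y_{3n+k})=\delta(A_k\varphi^n)=\lfloor t_n\rfloor$, while $\delta(x_{3n+k})=\delta(A_k\sqrt{10}\,\varphi^n)=\lfloor t_n+\tfrac12\rfloor$, since $A_k\sqrt{10}\,\varphi^n=10^{1/2}\cdot A_k\varphi^n$ and $\log\sqrt{10}=\tfrac12$. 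Because $\lfloor t_n+\tfrac12\rfloor\in\{\lfloor t_n\rfloor,\lfloor t_n\rfloor+1\}$, we obtain the clean criterion: $(x_{3n+k},y_{3n+k})\in\mathcal{C}$ if and only if the fractional part $\{t_n\}\ge\tfrac12$.

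Next I would record how $\{t_n\}$ evolves. Since $t_{n+1}-t_n=\log\varphi$ and $37<\varphi<38$ (first Remark), we have $\log\varphi\in(\log 37,\log 38)$, hence $\lfloor\log\varphi\rfloor=1$ and $\alpha:=\{\log\varphi\}=\log\varphi-1$ satisfies $\tfrac12<\alpha<\tfrac34$ (concretely $0.568<\alpha<0.580$). Thus $\{t_{n+1}\}=\{\{t_n\}+\alpha\}$, and the proposition becomes the elementary statement about the orbit of a rotation by $\alpha$: no three consecutive terms of $\{t_n\},\{t_{n+1}\},\{t_{n+2}\},\dots$ all fall in $[0,\tfrac12)$.

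For the case analysis, suppose $\{t_n\}<\tfrac12$, so the $n$-th term is in $\mathcal{S}\setminus\mathcal{C}$. If $\{t_n\}+\alpha<1$, then $\{t_{n+1}\}=\{t_n\}+\alpha\ge\alpha>\tfrac12$, and the $(n+1)$-th term lies in $\mathcal{C}$. Otherwise $\{t_n\}\in[1-\alpha,\tfrac12)$, whence $\{t_{n+1}\}=\{t_n\}+\alpha-1\in[0,\alpha-\tfrac12)$; using $\alpha<\tfrac34$ we get $\{t_{n+1}\}+\alpha<2\alpha-\tfrac12<1$, so no wraparound occurs and $\{t_{n+2}\}=\{t_{n+1}\}+\alpha\ge\alpha>\tfrac12$, placing the $(n+2)$-th term in $\mathcal{C}$. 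In both cases at least one of the two terms following an element of $\mathcal{S}\setminus\mathcal{C}$ belongs to $\mathcal{C}$, so the sequence $(x_{3n+k},y_{3n+k})$ has no three consecutive terms in $\mathcal{S}\setminus\mathcal{C}$. Since each of the three subsequences then meets $\mathcal{C}$ infinitely often, $\mathcal{C}$ is infinite.

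The main obstacle is the numerical control of $\alpha$: the entire argument rests on the two inequalities $\tfrac12<\alpha<\tfrac34$. The lower bound $\alpha>\tfrac12$ is what forces a non-wrapping step to land in $[\tfrac12,1)$, while the upper bound $\alpha<\tfrac34$ is exactly what prevents two successive wrapping steps. I would therefore take care to derive $\tfrac12<\{\log\varphi\}<\tfrac34$ rigorously from $37<\varphi<38$, and likewise to justify the translation step, where the identifications $\delta(x_{3n+k})=\delta(A_k\sqrt{10}\,\varphi^n)$ and $\delta(y_{3n+k})=\delta(A_k\varphi^n)$ from the Remark (which in turn rely on Proposition~\ref{no power 10} and Proposition~\ref{explicit}) are precisely what converts the digit condition into the fractional-part criterion $\{t_n\}\ge\tfrac12$.
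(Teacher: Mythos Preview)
Your proof is correct and takes a genuinely different route from the paper. The paper argues by explicit digit tracking: assuming $(x_n,y_n)\in\mathcal{C}$ and the next two terms lie in $\mathcal{S}\setminus\mathcal{C}$, it pins down the values of $\delta(p_{n+i})$ and $\delta(q_{n+i})$ step by step using the bounds $37<\varphi<38$ and $1441<\varphi^2<1442$, eventually forcing $\delta(p_{n+3})=\delta(q_{n+3})+1$. Your approach instead recasts the digit condition as a statement about the orbit of the circle rotation $s\mapsto\{s+\alpha\}$ with $\alpha=\{\log\varphi\}$, and the two inequalities $\tfrac12<\alpha<\tfrac34$ immediately yield a two-case analysis showing that from any point in $[0,\tfrac12)$ the orbit returns to $[\tfrac12,1)$ within two steps. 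This is cleaner and more conceptual; in particular, it proves directly that no \emph{two} consecutive terms in $\mathcal{S}\setminus\mathcal{C}$ can be followed by a third, without needing a preceding term in $\mathcal{C}$ as an anchor (the paper's argument tacitly assumes such an anchor exists, which one must verify separately at the start of each subsequence). The trade-off is that the paper's computation stays entirely within the integer/$\delta$ framework already set up, whereas your argument requires the additional numerical verification that $10^{3/2}<37<\varphi<38<10^{7/4}$ to secure $\tfrac12<\alpha<\tfrac34$; both bounds are easy ($10^{3/2}=10\sqrt{10}<32$ and $10^{7/4}>56$), and you rightly flag this as the point deserving care.
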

\begin{proof}
	Consider one of the  three sequences; that is, fix $k$. Rename the index of the terms and assume that we have consecutive elements such that $(x_n,y_n)\in\mathcal{C}$ and 
	$(x_{n+1},y_{n+1}),(x_{n+2},y_{n+2})\in\mathcal{S}\setminus\mathcal{C}$. We want to prove that $(x_{n+3},y_{n+3})\in\mathcal{C}$.
	
	For every $m\geq1$, denote $p_m=A_k\sqrt{10}\,\varphi^m$ and $q_m=A_k\varphi^m$. By using the last Remark, we have
	$$
	\delta(p_n)=\delta(q_n)+1,\quad  \delta(p_{n+1})=\delta(q_{n+1}),\quad  \delta(p_{n+2})=\delta(q_{n+2}),
	$$
	and we want to prove $\delta(p_{n+3})=\delta(q_{n+3})+1$. Let $\beta=\delta(p_n)$. Then, $\delta(q_n)=\beta-1$. By~(\ref{fites g}), one has
	$$
	\delta(p_{n+1})=\delta(p_n \varphi)\in\{\beta+1,\beta+2\},\quad 
	\delta(q_{n+1})=\delta(q_n \varphi)\in\{\beta,\beta+1\}.
	$$
	Since $\delta(p_{n+1})=\delta(q_{n+1})$, we must have $\delta(p_{n+1})=\delta(q_{n+1})=\beta+1$.
	
	As for $(p_{n+2},q_{n+2})$, by~(\ref{fites g}), we have
	$$
	\delta(p_{n+2})=\delta(p_{n+1}\varphi)\in\{\beta+2,\beta+3\},\quad
	\delta(p_{n+2})=\delta(p_{n}\,\varphi^2)\in\{\beta+3,\beta+4\}.
	$$
	Hence, $\delta(p_{n+2})=\delta(q_{n+2})=\beta+3$.  
	
	As for  $(p_{n+3},q_{n+3})$,  by~(\ref{fites g}), we obtain
	$$
	\delta(p_{n+3})=\delta(p_{n+2}\varphi)\in\{\beta+4,\beta+5\},\ \quad 
	\delta(q_{n+3})=\delta(q_{n+2}\varphi)\in\{\beta+4,\beta+5\}.
	$$
	Now, on one hand we have, that the relation $q_n<10^\beta\le p_n$ and $\varphi<38$ implies 
	$$
	q_{n+3}=q_n\,\varphi^3<10^\beta\cdot 38^3,\quad \delta(q_{n+3})\le \delta(10^\beta\cdot 38^3)=\beta+4.
	$$
	Hence, $\delta(q_{n+3})=\beta+4$.
	On the other,
	$$
	p_{n+3}
	=q_{n+3}\sqrt{10}
	=q_{n+2}\,\varphi\,\sqrt{10}
	\ge 10^{\beta+3}\,\varphi\,\sqrt{10}
	>10^{\beta+3}\cdot 37\cdot\sqrt{10}.
	$$
	Then, $\delta(p_{n+3})\ge \delta(10^{\beta+3}\cdot 37\cdot\sqrt{10})=\beta+5$. Hence, $\delta(p_{n+3})=\beta+5$.
	Thus, we conclude $\delta(p_{n+3})=\beta+5=\delta(q_{n+3})+1$.
\end{proof}

\section{Conclusion}

We summarize here our enquires on the WhtasApp question raised in the introduction. 
The set $\mathcal{S}$ of positive solutions of the diophantine equation $x(x+1)=10y(y+1)$ 
is the set of terms of the sequence 
$(x_n,y_n)$ defined by
$$
(x_1,y_1)=(4,1),\quad (x_2,y_2)=(20,6),\quad (x_3,y_3)=(39,12),
$$
and, for $n\ge 1$,
$$
x_{n+3}=19x_n+60y_n+39, \qquad y_{n+3}=6x_n+19y_n+12.
$$
Explicitly, for $k\in\{1,2,3\}$, if
$$
A_k=\frac{1}{2}\left(\frac{x_k}{\sqrt{10}}+y_k+\frac{10+\sqrt{10}}{20}\right),\ 
\mbox{ and }\ \varphi=19+6\sqrt{10},
$$ 
then, for $n\ge 1$,
$$
x_{3n+k}=\lfloor A_k\sqrt{10}\varphi^n\rfloor,\qquad y_{3n+k}=\lfloor A_k\,\varphi^n\rfloor.
$$
The set $\mathcal{C}$ of solutions that concatenate well with regard to our WhastApp problem is formed by the terms $(x_n,y_n)\in\mathcal{S}$ such that $x_n$ has one more digit than $y_n$. The set $\mathcal C$ has infinitely many elements, and the sequence of quotients $(y_n+1)/(x_n+1)$ 
with $(x_n,y_n)\in\mathcal{C}$ is strictly decreasing with limit $1/\sqrt{10}$. 
\bigskip

\noindent\textbf{Acknowledgement}\\
	We would like to thanks Xavier de Cabo, the sender of the WhatsApp message.


\begin{thebibliography}{[0}
	\bibitem{NivZucMon}
	I.~Niven,  H.~S.~Zuckerman, and H.~L.~Montgomery, \emph{An Introduction to the Theory of Numbers}, John Wiley \&  Sons, 1991.
	
	\bibitem{Ono} 
	T.~Ono, \emph{An Introduction to Algebraic Number Theory}, Plenum Press, 1990.
	
	\bibitem{Stevenhagen} 
	P. Stevenhagen, \emph{Number Rings}, \\
	\texttt{http://websites.math.leidenuniv.nl/algebra/ant.pdf} 
	
\end{thebibliography}
\end{document}